\newtheorem{theorem}{Theorem}[section]
\newtheorem{lemma}[theorem]{Lemma}
\newtheorem{proposition}[theorem]{Proposition}
\theoremstyle{definition}
\newtheorem{definition}[theorem]{Definition}
\newtheorem{example}[theorem]{Example}
\theoremstyle{remark}
\newtheorem*{remark}{Remark}
\newcommand{\argmax}{\mathop{\rm arg\,max}}
\newcommand{\Ch}{\mathrm{Ch}}
\newcommand{\ot}{\coloneqq}
\newcommand{\SM}{\mathop{\mathrm{SM}}}
\newcommand{\MM}{\mathop{\mathrm{MM}}}
\newcommand{\cF}{{\mathcal F}}
\newcommand{\cM}{{\mathcal M}}
\newcommand{\hsucc}{\mathbin{\hat{\succ}}}
\newcommand{\tE}{\tilde{E}}
\newcommand{\tF}{\tilde{F}}
\newcommand{\tG}{\tilde{G}}
\newcommand{\tM}{\tilde{M}}
\newcommand{\tV}{\tilde{V}}
\newcommand{\tw}{\tilde{w}}
\journal{Discrete Applied Mathematics}
\begin{document}
\begin{frontmatter}
\title{Antimatroids Induced by Matchings}
\author{Yasushi Kawase\fnref{YK}}
\ead{kawase.y.ab@m.titech.ac.jp}
\address[YK]{Tokyo Institute of Technology, Tokyo 152-8550, Japan.}

\author{Yutaro Yamaguchi\fnref{YY}\corref{cor}}
\ead{yutaro\_yamaguchi@ist.osaka-u.ac.jp}
\address[YY]{Osaka University, Osaka 565-0871, Japan.}
\cortext[cor]{Corresponding author.}

\begin{abstract}
We explore novel connections between antimatroids and matchings in bipartite graphs.
In particular, we prove that a combinatorial structure induced by stable matchings or maximum-weight matchings is an antimatroid.
Moreover, we demonstrate that every antimatroid admits such a representation by stable matchings and maximum-weight matchings.
\end{abstract}

\begin{keyword}
Antimatroids \sep Bipartite Graphs \sep Stable Matchings \sep Weighted Matchings
\end{keyword}
\end{frontmatter}

\section{Introduction}
An \emph{antimatroid} is a combinatorial abstraction of the convexity in geometry, 
which is represented by a nonempty set system $(E, \cF)$ satisfying
(i) \emph{accessibility}: every nonempty $X \in \cF$ has an element $e \in X$ such that $X-e\in\cF$,
and
(ii) \emph{union-closedness}: $X\in\cF$ and $Y\in\cF$ imply $X \cup Y \in \cF$.
See, e.g., \cite{dietrich1989,KLS1991} for more on the basics.

An antimatroid is known to be equivalent to a \emph{convex geometry} by complementation,
i.e., for any antimatroid $(E, \cF)$, the family $\{\, E \setminus X \mid X \in \cF \,\}$ forms a convex geometry (and vice versa).
A typical example of convex geometries is so-called the \emph{convex shelling} on a point set in an Euclidean space.
To establish a representation theorem for convex geometries,
Kashiwabara, Nakamura, and Okamoto \cite{KNO2005} introduced an extended procedure called the \emph{generalized convex shelling},
which can yield an arbitrary convex geometry (up to isomorphism).
Koshevoy \cite{koshevoy1999} pointed out the equivalence between \emph{path-independent choice functions}\footnotemark{} and convex geometries (formally, see Theorem~\ref{thm:Koshevoy}), which can be regarded as another representation of convex geometries.
\footnotetext{For a finite set $E$, a function $\Ch \colon 2^E \to 2^E$ is called a \emph{choice function} if $\Ch(X)\subseteq X$ holds for every $X\subseteq E$. A choice function $\Ch\colon 2^E\to 2^E$ is \emph{path-independent} if $\Ch(\Ch(X) \cup Y) = \Ch(X \cup Y)$ for every $X, Y \subseteq E$.}
More precisely, a choice function is path-independent if and only if it is the extreme function of the closure function of some convex geometry.
The \emph{substitutability}, a weaker condition than the path-independence, also yields a convex geometry \cite{fuji2015}.

In this paper, we explore a novel representation of antimatroids
with the aid of matchings in bipartite graphs.
In particular, we focus on \emph{stable matchings} and \emph{maximum-weight matchings}.

Since the seminal paper by Gale and Shapley \cite{GS1962}, the stable matching and its generalizations have been widely studied in mathematics, economics, and computer science; see, e.g, \cite{GI1989,RS1991,manlove2013} for more detail.
As a noteworthy connection to another discrete structure,
Conway \cite{knuth1976} pointed out that the set of stable matchings forms a distributive lattice under a natural dominance relation.
Conversely, Blair \cite{blair1984} proved that every finite distributive lattice is isomorphic to such one formed by the stable matchings in some instance.

The maximum-weight matching problem is one of the most fundamental combinatorial optimization problems on graphs;
see, e.g., \cite{KV2012,schrijver2003} for more detail.
It is known that maximum-weight matchings induce a \emph{valuated matroid}\footnotemark{} structure \cite{murota2009},
and every valuated matroid yields a path-independent choice function \cite{FT2006,MY2015}.
\footnotetext{
A \emph{valuated matroid} is a pair of a finite set $E$ and a function $f \colon 2^E \to \mathbb{R}\cup\{-\infty\}$ that satisfies the following \emph{gross substitute} condition~\cite{KC1982,FY2003}:
for any $p,q\in\mathbb{R}^E$ with $p\le q$ and any $X\in\argmax\{\, f(S)-\sum_{e\in S}p_e\mid S\subseteq E \,\}$,
there exists a set $Y\in\argmax\{\, f(S)-\sum_{e\in S}q_e\mid S\subseteq E \,\}$ such that $\{\, e\in X\mid p_e=q_e \,\}\subseteq Y$.}
Yokoi~\cite{yokoi2014} introduced the concept of {\em matroidal} choice functions,
which is a further restriction of path-independent choice functions,
and showed that a valuated matroid also induces a matroidal choice function under a certain condition.
However, the converse directions are not true, i.e.,
some path-independent or matroidal choice function cannot be induced
by any valuated matroid in such ways.

Our result is a representation theorem for antimatroids, which is roughly stated as follows.
A (stable or maximum-weight) matching instance
induces a map from the one-side vertex set to the other side (see Definitions \ref{def:SM} and \ref{def:MM}).
The codomain of such a map always forms an antimatroid (Theorems \ref{theorem:sm_induce} and \ref{theorem:mm_induce}), and, conversely,
any antimatroid can be obtained as the codomain of such a map induced by some matching instance
(Theorems \ref{theorem:sm_rep} and \ref{theorem:mm_rep}).
We hope that this result helps us to understand discrete structures
such as antimatroids, matchings, choice functions, and valuated matroids.

The rest of this paper is organized as follows.
In Section \ref{sec:preliminaries}, we describe necessary definitions, basic properties, and examples.
In Section \ref{sec:representations}, we show that any antimatroid admits a matching representation.
In Section \ref{sec:induce}, we prove that any matching instance induces an antimatroid.

\section{Preliminaries}\label{sec:preliminaries}

We consider matchings in a bipartite graph $G=(U,V;E)$, where $U$ and $V$ are the disjoint vertex sets (we regard $U$ and $V$ as {\em left} and {\em right}, respectively) and $E\subseteq U\times V$ is the set of edges.
For a vertex $r\in U\cup V$, we denote by $N_G(r)$ the set of neighbors of $r$, i.e., $N_G(r)=\{\, t\in U\cup V\mid (t,r)\in E~\text{or}~(r,t)\in E \,\}$.
For a vertex subset $X\subseteq U\cup V$, define $E[X] \coloneqq \{\, (u,v)\in E\mid u\in X~\text{and}~v\in X \,\}$
and $G[X]\coloneqq(U\cap X, V\cap X;E[X])$.
For an edge subset $F \subseteq E$,
define $\partial(F) \coloneqq \bigcup_{(u,v)\in F}\{u,v\}$.
An edge subset $M\subseteq E$ is called a \emph{matching} in $G$
if no two edges in $M$ have a common vertex, i.e.,
$\bigl|\bigl\{\;\!(u,v)\in M\mid u=r~\text{or}~v=r\;\!\bigr\}\bigr|\le 1$ for every vertex $r\in U \cup V$
(or, equivalently, $|\partial(M)| = 2|M|$).
For a matching $M$ in $G$ and an edge $(u,v)\in M$,
let $M(u)\coloneqq v$ and $M(v)\coloneqq u$.
We write $\mathcal{M}_{G}$ for the set of all matchings in $G$.

Let $F$ be a map from $2^U$ to $2^V$ that is induced by stable matchings or maximum-weight matchings as we will see below.
Our purpose is to study the structure of the codomain $\{\, F(U')\mid U' \subseteq U \,\}$.

\subsection{Stable Matchings}
Let us consider a bipartite graph $G=(U,V;E)$ with preferences (strict orders) $\succ_r$ on $N_G(r)$ for all $r\in U\cup V$.
We denote the profile $(\succ_r)_{r \in U \cup V}$ of preferences simply by $\succ$,
and refer to a pair $(G, \succ)$ as a \emph{stable matching instance}.
For a vertex subset $X \subseteq U \cup V$, we mean by $(G, \succ)_X$
the stable matching instance $(G[X], (\succ_r)_{r \in X})$
obtained by restricting $(G, \succ)$ to $X$.

Let $M \subseteq E$ be a matching in $G$.
An edge \((u,v)\in E\) is called a \emph{blocking pair} against $M$ in $G$ if
$[\, u \not\in \partial(M)$ or $v\succ_u M(u) \,]$ and 
$[\, v \not\in \partial(M)$ or $u\succ_v M(v) \,]$.
A matching \(M\in\mathcal{M}_{G}\) is called a \emph{stable matching} 
if there exists no blocking pair against $M$ in $G$.
It is well-known that, for any stable matching instance,
there exists at least one stable matching,
and moreover all stable matchings consist of the same set of vertices.

\begin{theorem}[McVitie--Wilson \cite{MW70}]\label{thm:RH}
  For any stable matching instance,
  if two matchings $M_1$ and $M_2$ in it are both stable,
  then $\partial(M_1) = \partial(M_2)$.
\end{theorem}

By this property, for each subset $U' \subseteq U$,
the set of vertices in $V$ who are matched in a stable matching in $(G, \succ)_{U' \cup V}$ is uniquely determined.
This fact naturally defines a map from $2^U$ to $2^V$.

For sake of disambiguation,
we adopt specific stable matchings defined algorithmically as follows.
For a stable matching instance,
a stable matching can be obtained by a simple algorithm,
so-called the {\em deferred acceptance algorithm} \cite{GS1962,MW1971} (see Algorithm~\ref{alg:gs}).
In each iteration, an unmatched left vertex $u$ proposes to the most-preferred right vertex $v$ in $u$'s preference list to whom it hasn't yet proposed.
Then, $v$ accepts the proposal if $v$ is unmatched or prefers $u$ to the current partner $u'$ (in this case, $u'$ becomes unmatched).
Otherwise, i.e., if $v$ prefers the current partner $u'$ to $u$, the proposal is rejected.
The process is repeated until every left vertex is matched or rejected by all its neighbors.

\begin{algorithm}
  \caption{Deferred Acceptance Algorithm}\label{alg:gs}
  \SetKwInOut{Input}{Input}\Input{A bipartite graph $G=(U,V;E)$ and a preference profile ${\succ} = (\succ_r)_{r\in U\cup V}$}
  \SetKwInOut{Output}{Output}\Output{A stable matching $\SM(G, \succ)\subseteq E$}
  let $T\ot U$ 
  and $M\ot\emptyset$\;
  \lForEach{$u\in U$}{$R_u\ot N_{G}(u)$}
  \While{$T\ne\emptyset$}{
    pick $u\in T$ arbitrarily\;
    \lIf{$R_u=\emptyset$}{$T\ot T-u$}
    \Else{
      take $v\in R_u$ so that $v\succeq_u v'$ for all $v'\in R_u$\;
      \lIf{$v \not\in \partial(M)$}{$M\ot M+(u,v)$, $T\ot T-u$}
      \Else{
        let $u' \ot M(v)$\;
        \lIf{$u'\succ_v u$}{$R_u\ot R_u-v$}
        \lElse{
          $M\ot M+(u,v)-(u',v)$,
          $R_{u'}\ot R_{u'}-v$,
          and $T\ot T+u'-u$}
      }
    }
  }
  \Return $M$\;
\end{algorithm}

A significant feature of this algorithm is that the output does not depend on the order of proposals,
i.e., it defines a unique stable matching.
For a stable matching instance $(G = (U, V; E), \succ)$ and a subset $U' \subseteq U$,
we denote by $\SM(G, \succ; U')$ the output of the deferred acceptance algorithm
for the restricted instance $(G, {\succ})_{U' \cup V}$.

\begin{definition}\label{def:SM}
  The map $F\colon 2^U\to 2^V$ {\em induced by a stable matching instance} $(G=(U,V;E), \succ)$ is defined by
    \[F(U') \coloneqq \partial(\SM(G, \succ; U')) \cap V = \{\, v\mid (u,v)\in\SM(G, \succ; U') \,\} \quad (U' \subseteq U).\]
\end{definition}

Note again that, by Theorem~\ref{thm:RH}, 
one can replace $\SM(G, \succ; U')$ in the above definition
with an arbitrary stable matching in the restricted instance $(G, {\succ})_{U' \cup V}$.

Let us mention two important properties of $F$
(which can be observed by picking $u$ from $U_2$ as priority in Line 4 in Algorithm~\ref{alg:gs}).
\begin{lemma}\label{lemma:sm_prop}
  The map $F\colon 2^U\to 2^V$ induced by a stable matching instance satisfies the following. 
  \renewcommand{\theenumi}{$(\alph{enumi})$}
  \renewcommand{\labelenumi}{\theenumi}
  \begin{enumerate}
  \item If $U_2\subseteq U_1\subseteq U$, then $F(U_2)\subseteq F(U_1)$. \label{lemma:sm_prop_monotone}
  \item If $U_2\subseteq U_1\subseteq U$ and $|F(U_1)|=|U_1|$, then $|F(U_2)|=|U_2|$. \label{lemma:sm_prop_shrink} %\hfill$\blacksquare$
  \end{enumerate}
\end{lemma}

We give an example of the map induced by a stable matching instance.
\begin{example}
Suppose that $U=\{u_1,u_2,u_3\}$, $V=\{v_1,v_2,v_3\}$, and
  \[E=\{(u_1,v_1),(u_1,v_2),(u_2,v_1),(u_2,v_3),(u_3,v_1),(u_3,v_2)\}.\]
Consider an instance $(G=(U,V;E), \succ)$, where
\begin{align*}
 &u_1\colon~ v_1\succ_{u_1} v_2, &&v_1\colon~ u_3\succ_{v_1} u_2\succ_{v_1} u_1,\\
 &u_2\colon~ v_1\succ_{u_2} v_3, &&v_2\colon~ u_1\succ_{v_2} u_3,\\
 &u_3\colon~ v_2\succ_{u_3} v_1, &&v_3\colon~ u_2.
\end{align*}
Then, for example, $F(U)=\{v_1,v_2,v_3\}$ because $\SM(G, \succ; U)=\{(u_1,v_2),\allowbreak{}(u_2,v_3),\allowbreak{}(u_3,v_1)\}$.
By similar calculations, we obtain that the codomain of $F$ is
\[\{\, F(U')\mid U'\subseteq U \,\}=\{\emptyset,\{v_1\},\{v_2\},\{v_1,v_2\},\{v_1,v_2,v_3\}\},\]
which forms an antimatroid on $V$.
\end{example}

\subsection{Maximum-Weight Matchings}
Given a bipartite graph $G=(U,V;E)$ with weights $w\colon E\to\mathbb{R}$,
the weight of a matching $M$, denoted by $w(M)$, is defined to be the sum of the weights of the edges in $M$.
We refer to a pair $(G,w)$ as a \emph{weighted matching instance}.
A \emph{maximum-weight matching} in $G$ is a matching in $\cM_G$ with weight $\max_{M \in \mathcal{M}_{G}} w(M)$.
If there exist multiple maximum-weight matchings,
we pick the lexicographically smallest (with respect to a fixed order on the edges) one among them.
Throughout the paper, we assume that a maximum-weight matching is always determined uniquely in this sense.
For each $U' \subseteq U$, let $\MM(G,w; U')$ denote the unique maximum-weight matching in $G[U' \cup V]$. 
\begin{definition}\label{def:MM}
  The map $F\colon 2^U\to 2^V$ {\em induced by a weighted matching instance} $(G=(U,V;E),w)$ is defined by
    \[F(U') \coloneqq \partial(\MM(G, w; U')) \cap V = \{\, v\mid (u,v)\in \MM(G, w; U') \,\} \quad (U' \subseteq U).\]
\end{definition}

The map $F$ induced by a weighted matching instance has the same properties as in Lemma \ref{lemma:sm_prop}.
\begin{lemma}\label{lemma:mm_prop}
  The map $F\colon 2^U \to 2^V$ induced by a weighted matching instance $(G = (U, V; E), w)$ satisfies the following. 
  \renewcommand{\theenumi}{$(\alph{enumi})$}
  \renewcommand{\labelenumi}{\theenumi}
  \begin{enumerate}
  \item If $U_2\subseteq U_1\subseteq U$, then $F(U_2)\subseteq F(U_1)$. \label{lemma:mm_prop_monotone}
  \item If $U_2\subseteq U_1\subseteq U$ and $|F(U_1)|=|U_1|$, then $|F(U_2)|=|U_2|$. \label{lemma:mm_prop_shrink}
  \end{enumerate}
\end{lemma}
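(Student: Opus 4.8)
The plan is to compare, for nested subsets $U_2 \subseteq U_1 \subseteq U$, the two optimal matchings $M_1 \coloneqq \MM(G,w;U_1)$ and $M_2 \coloneqq \MM(G,w;U_2)$ through their symmetric difference $D \coloneqq M_1 \triangle M_2$. As usual, every connected component of $D$ is either an alternating cycle or an alternating path whose edges come alternately from $M_1$ and $M_2$, and both statements will be proved by contradiction: a violation will produce a vertex of degree one in $D$, hence an endpoint of a path component $C$, which we then exploit. The core mechanism is as follows. Swapping $M_1$ along $C$, i.e.\ $M_1 \triangle C$, is always a matching contained in $E[U_1 \cup V]$, since every vertex touched by $C$ is matched by $M_1$ or by $M_2$ and therefore lies in $U_1 \cup V$. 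Because $C$ is nonempty, $M_1 \triangle C \neq M_1$, so optimality of $M_1$ together with the distinct-weight assumption forces the strict inequality $w(C \cap M_1) > w(C \cap M_2)$. Symmetrically, \emph{if} $M_2 \triangle C$ is also a matching contained in $E[U_2 \cup V]$, optimality of $M_2$ forces $w(C \cap M_2) > w(C \cap M_1)$, a contradiction. The only way $M_2 \triangle C$ can escape $E[U_2 \cup V]$ is for an added $M_1$-edge to be incident to a vertex of $U_1 \setminus U_2$; such a vertex is unmatched by $M_2$, hence has degree one in $D$ and must be an endpoint of $C$ incident to an $M_1$-edge. Thus everything reduces to controlling the endpoints of $C$ by a parity count along the alternation.

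For (a), suppose $v \in F(U_2) \setminus F(U_1)$. Then $v \in V$ is matched by $M_2$ but not by $M_1$, so $v$ has degree one in $D$ and is the endpoint of a path component $C$ whose incident edge lies in $M_2$. Walking along $C$, the edges alternate $M_2, M_1, M_2, \dots$ while the vertices alternate between $V$ and $U$; a parity count shows that whenever the far endpoint of $C$ lies in $U$, its incident edge is an $M_2$-edge, so that endpoint is matched by $M_2$ and hence lies in $U_2$, not in $U_1 \setminus U_2$. Consequently neither endpoint of $C$ lies in $U_1 \setminus U_2$, so $M_2 \triangle C \subseteq E[U_2 \cup V]$ is legal and the two strict inequalities collide. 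This rules out such a $v$ and proves $F(U_2) \subseteq F(U_1)$.

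For (b), assume $|F(U_1)| = |U_1|$, which means $M_1$ matches every vertex of $U_1$, and suppose for contradiction that some $u \in U_2$ is left unmatched by $M_2$. The hypothesis guarantees that $u$ is matched by $M_1$ (since $u \in U_1$), so $u$ again has degree one in $D$ and is the endpoint of a path component $C$ incident to an $M_1$-edge. The same parity count along $C$ shows that its far endpoint, if it lies in $U$, is incident to an $M_2$-edge and hence matched by $M_2$; thus no endpoint of $C$ lies in $U_1 \setminus U_2$, the swap $M_2 \triangle C$ is a legal matching in $G[U_2 \cup V]$, and we reach the same contradiction. Therefore every vertex of $U_2$ is matched by $M_2$, i.e.\ $|F(U_2)| = |U_2|$.

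The step I expect to be the main obstacle is exactly the validity of the $M_2$-side swap: ensuring that $M_2 \triangle C$ never uses a vertex of $U_1 \setminus U_2$. Everything else---the decomposition of $D$, the two optimality inequalities, and the appeal to distinct weights to make them strict---is routine once this containment is secured, so the real work is the parity bookkeeping that pins down the nature of the far endpoint of $C$. I would also flag the easily overlooked role of the hypothesis in (b): without $|F(U_1)| = |U_1|$, the vertex $u$ could be unmatched by \emph{both} $M_1$ and $M_2$, hence isolated in $D$ and not an endpoint of any path, in which case no contradiction arises---so the hypothesis is used precisely to place $u$ on a path component.
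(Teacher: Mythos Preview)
Your proof is correct. For (a), you follow the paper's strategy---symmetric-difference decomposition followed by a swap contradiction---but you are more explicit than the paper about \emph{why} $M_2 \triangle C \subseteq E[U_2 \cup V]$; the paper simply asserts $M_i \triangle P \in \cM_{G[U_i \cup V]}$ for both $i$, and your parity bookkeeping is exactly what is needed to justify the $i=2$ case. For (b), your argument differs from the paper's: the paper first reduces to the case $U_1 = U_2 + q$, then argues that $M_1 \triangle M_2$ must be a single alternating path from $q$ (else one of $M_1, M_2$ could be improved), and finishes with the size count $|M_1| \leq |M_2| + 1$. You instead reuse the swap mechanism from (a) directly, picking an unmatched $u \in U_2$ and deriving the same pair of contradictory strict inequalities. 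Your route is more uniform across the two parts and avoids the reduction step; the paper's counting argument is shorter once the reduction is made. Both are valid.
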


To prove the lemma, we use the following property
(which follows from the fact that at most two edges in $M_1 \triangle M_2$ are incident to each vertex).
\begin{proposition}[cf.~{\cite[Chapter 16]{schrijver2003}}]\label{prop:sd}
  For $M_1,M_2\in\mathcal{M}_G$, the symmetric difference $M_1 \triangle M_2 = (M_1 \setminus M_2) \cup (M_2 \setminus M_1)$ forms disjoint cycles and paths.
\end{proposition}

Now, we turn to the proof of Lemma~\ref{lemma:mm_prop}.
\begin{proof}[Proof of Lemma~\ref{lemma:mm_prop}]
\noindent$\boldsymbol{(a)}$: To prove by contradiction, suppose that $F(U_2)\not\subseteq F(U_1)$ and let $v^*\in F(U_2)\setminus F(U_1)$.
Define $M_i \coloneqq \MM(G,w; U_i)$ for $i = 1, 2$.
By Proposition~\ref{prop:sd}, $M_1 \triangle M_2$ forms disjoint cycles and paths.
Since $v^\ast \in F(U_2) \setminus F(U_1)$,
the connected component of $G[M_1 \triangle M_2]$ containing $v^\ast$ is a path of length at least 1,
and let $P\subseteq M_1\triangle M_2$ be the set of edges in the path.
Then, $M_i \triangle P \in \cM_{G[U_i \cup V]}$ $(i = 1, 2)$
and $w(M_1)+w(M_2)=w(M_1\triangle P)+w(M_2\triangle P)$.
Hence we have $w(M_i)=w(M_i\triangle P)$
and $M_i$ is lexicographically smaller than $M_i \triangle P$, for $i=1,2$, as $M_i$ is the lexicographically smallest maximum weight matching in $\mathcal{M}_{G[U_i\cup V]}$.
However, the latter property implies that $M_i\cap P$ is lexicographically smaller than $M_j\cap P$ for both $(i,j)=(1,2),(2,1)$, a contradiction.

\medskip
\noindent$\boldsymbol{(b)}$: It is sufficient to prove the case when $U_1 = U_2 + q$ for some $q \in U \setminus U_2$. 
Define $M_i \coloneqq \MM(G,w; U_i)$ for $i = 1, 2$.
By Proposition~\ref{prop:sd}, $M_1 \triangle M_2$ forms disjoint cycles and paths,
and moreover it consists of a single path from $q$ (or the empty set, which can be regarded as a path of length $0$),
since otherwise (i.e., if it contains a cycle or a path disjoint from $q$ that is of length at least $1$)
we can improve at least one of $M_1$ and $M_2$.
Therefore, $|U_2| + 1 = |U_1| = |F(U_1)| = |M_1| \leq |M_2| + 1 = |F(U_2)| + 1 \leq |U_2| + 1$, in which the equalities must hold throughout. 
\end{proof}

Let us see an example of the map induced by a weighted matching instance.
\begin{example}
Suppose that $U=\{u_1,u_2,u_3\}$, $V=\{v_1,v_2\}$, and
  \[E=\{(u_1,v_1),(u_1,v_2),(u_2,v_1),(u_3,v_1)\}.\]
Consider an instance $(G=(U,V;E),w)$, where
\[  w((u_1,v_1))=20,~
  w((u_1,v_2))=8,~
  w((u_2,v_1))=9,~\text{and}~
  w((u_3,v_1))=15.\]
Then, for example, $F(\{u_1,u_2\})=\{v_1\}$ because $\MM(G,w; \{u_1,u_2\})=\{(u_1,v_1)\}$.
By similar calculations, we obtain that the codomain of $F$ is
\[\{\, F(U')\mid U'\subseteq U \,\}=\{\emptyset,\{v_1\},\{v_1,v_2\}\},\]
which forms an antimatroid on $V$.
\end{example}

\begin{remark}
While in this paper we focus on the set of matched right vertices when restricting the left vertices, readers may get interested also in the set of matched left vertices.
More precisely, for a stable matching instance $(G, {\succ})$ and a maximum-weight matching instance $(G,w)$, 
one may consider choice functions
$\Ch \colon 2^U \to 2^U$ defined by
\(\Ch(U') \coloneqq \partial(\SM(G, {\succ}; U')) \cap U\) and 
\(\Ch(U') \coloneqq \partial(\MM(G, w; U')) \cap U\) \((U' \subseteq U)\), respectively.
Kawase~\cite{kawase2015} proved that, in both cases, the function $\Ch$ is not only path-independent but also \emph{size-monotone}, i.e., $|\Ch(U_1)|\leq|\Ch(U_2)|$ for all $U_1\subseteq U_2\subseteq U$.
The size-monotonicity implies that the antimatroids yielded by such choice functions are restricted, and we cannot obtain a representation theorem for antimatroids in this way.
\end{remark}

\section{Matching Representations of Antimatroids}\label{sec:representations}
In this section, we provide a representation of an antimatroid by a matching instance.
It is worth remarking that the number of distinct antimatroids on a fixed $n$-element set
is doubly exponential in $n$ (see Appendix~\ref{sec:number}).
Hence, we need an exponential-size representation (i.e., a representation with exponentially many bits).

Let $(S,\mathcal{F})$ be an antimatroid.
Let $d\colon\mathcal{F}\setminus\{\emptyset\}\to S$ be a function such that $d(X)\in X$ and $X-d(X)\in\mathcal{F}$ for every $X\in\mathcal{F}\setminus\{\emptyset\}$.
There exists such a function $d$ since $\mathcal{F}$ satisfies accessibility.

Let $\succ^*$ be a total order on $\mathcal{F}\setminus\{\emptyset\}$ such that $X\succ^* Y$ whenever $X\subsetneq Y$.
Namely, $X\succ^* Y$ implies $Y\not\subseteq X$.
Also, let $\succ^{X}$ be the order on each $X=\{a_1,\dots,a_{k}\}\in\mathcal{F}\setminus\{\emptyset\}$ such that $a_1\succ^{X}\dots\succ^{X}a_{k}$, where $a_i=d(X \setminus \{a_{i+1},\dots,a_{k}\})$ $(i=1,2,\dots,k)$.
Note that $\{a_1,\dots,a_i\}\in\mathcal{F}$ $(i = 0, 1, \dots, k)$ by the definition of $d$.

\subsection{Representation by Stable Matchings}
We construct a stable matching instance $(G=(U,V;E),\succ)$ as follows:
\begin{itemize}
\item $U\coloneqq \mathcal{F}\setminus\{\emptyset\}$ and $V\coloneqq S$;
\item $E\coloneqq \{\, (u,v)\mid u\in U,~v\in u \,\}$;
\item ${\succ_u}\coloneqq{\succ^{u}}$ for each $u\in U$;
\item let $\succ_v$ be the restriction of $\succ^*$ to $\{\, u \mid v\in u \,\}$ for each $v\in V$.
\end{itemize}
We prove that this stable matching instance derives the desired antimatroid.
\begin{theorem}\label{theorem:sm_rep}
Let $(S, \cF)$ be an antimatroid, and $F \colon 2^{\cF \setminus \{\emptyset\}} \to 2^S$
the map induced by the stable matching instance $(G, \succ)$ defined as above.
Then the codomain of $F$ coincides with $\cF$.
\end{theorem}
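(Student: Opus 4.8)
The plan is to prove the two inclusions
$\cF \subseteq \{\, F(U') \mid U' \subseteq \cF\setminus\{\emptyset\} \,\}$
and
$\{\, F(U') \mid U' \subseteq \cF\setminus\{\emptyset\} \,\} \subseteq \cF$
separately, exploiting the two features built into the instance. On the left side, each vertex $u$ ranks its elements by an order $\succ^u$ \emph{all of whose prefixes are feasible}; on the right side, each vertex $v$ prefers \emph{smaller} feasible sets, since $X \succ^* Y$ whenever $X \subsetneq Y$. These two ingredients drive the two inclusions respectively.

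For $\cF \subseteq \mathrm{codomain}$, I would exhibit an explicit witness for each nonempty $X = \{a_1,\dots,a_k\} \in \cF$, indexed so that $a_1 \succ^X \cdots \succ^X a_k$ with $a_i = d(X \setminus \{a_{i+1},\dots,a_k\})$. Take $U' \coloneqq \{X_1,\dots,X_k\}$, where $X_i \coloneqq \{a_1,\dots,a_i\}$ are the feasible prefixes. First I would check that the order $\succ^{X_i}$ attached to the left vertex $X_i$ is exactly the restriction of $\succ^X$, because $d$ is a fixed function and $a_j = d(\{a_1,\dots,a_j\})$. Then I claim the matching $\{\,(X_i,a_i)\mid 1 \le i \le k\,\}$ is stable: the only possible blocking pairs are $(X_i,a_j)$ with $j < i$, and although $X_i$ does prefer $a_j$ to its partner $a_i$, the vertex $a_j$ is matched to $X_j \subsetneq X_i$ and hence prefers $X_j$ to $X_i$ by the defining property of $\succ^*$. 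Since all stable matchings in a given instance share the same $V$-side, this gives $F(U') = X$; the case $X = \emptyset$ is covered by $U' = \emptyset$.

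For the reverse inclusion I would fix an arbitrary $U' \subseteq \cF\setminus\{\emptyset\}$, set $M \coloneqq \SM(G,\succ;U')$ and $W \coloneqq F(U')$, and establish the characterization
\[
  W \;=\; \bigcup_{\text{matched } u \in U'} P_u,
  \qquad
  P_u \coloneqq \{\, a^u_1, \dots, a^u_{i(u)} \,\},
\]
where $M(u) = a^u_{i(u)}$ is the partner of $u$ and $a^u_1 \succ^u a^u_2 \succ^u \cdots$ is $u$'s order. The inclusion $W \subseteq \bigcup_u P_u$ is immediate, since each $v \in W$ is $M(u) \in P_u$ for some matched $u$. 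For $\bigcup_u P_u \subseteq W$, the key step is stability: for $j < i(u)$ the vertex $u$ prefers $a^u_j$ to its own partner, so the non-blocking condition on the edge $(u,a^u_j)$ forces $a^u_j$ to be matched, i.e.\ $a^u_j \in W$; hence $P_u \subseteq W$. Each $P_u$ is a prefix of $u$'s order and therefore feasible, so union-closedness of $\cF$ yields $W = \bigcup_u P_u \in \cF$.

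The hard part will be the characterization in the second inclusion: this is where stability has to be converted into set-theoretic statements, and where it is crucial that the whole prefix $P_u$ — not merely the singleton partner $\{M(u)\}$ — lies in $W$, so that union-closedness of $\cF$ can be applied. By contrast, the first inclusion is a direct verification, and it is precisely the "smaller-is-better" property of $\succ^*$ that makes the witness matching stable there; note that $\succ^*$ plays no role in the second inclusion, which only needs stability together with the prefix-feasibility of the left orders.
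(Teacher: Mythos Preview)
Your proof is correct and follows essentially the same two-inclusion structure as the paper: the same prefix witnesses $X_i = \{a_1,\dots,a_i\}$ for $\cF \subseteq \cF'$, and the same identity $F(U') = \bigcup_{(u,v)\in M}\{v' \in u \mid v' \succeq_u v\}$ combined with union-closedness for $\cF' \subseteq \cF$. The only cosmetic difference is that the paper argues both directions via the deferred acceptance algorithm (computing the output in the first, tracking proposals in the second), whereas you verify stability of the witness matching directly and deduce $P_u \subseteq W$ from the non-blocking condition alone---arguably a cleaner justification, but the underlying idea is identical.
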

\begin{proof}
Let $\mathcal{F}'=\{\, F(U')\mid U'\subseteq U \,\}$. 
We claim that $\mathcal{F}'=\mathcal{F}$.

We first see $\mathcal{F}\subseteq \mathcal{F}'$.
Let $X=\{v_1,\dots,v_{k}\}\in\mathcal{F}$ such that $v_1\succ_{X}\dots\succ_{X}v_{k}$, i.e., $v_i=d(X \setminus \{v_{i+1},\dots,v_{k}\})$ $(i=1,2,\dots,k)$.
We define $u_i \coloneqq  \{v_1,\dots,v_i\} \in \cF \setminus \{\emptyset\} = U$ for $i=1,2,\dots,k$.
Then, $X=F(\{u_1,\dots,u_k\})\in\mathcal{F}'$ because $\SM(G,\succ; \{u_1,\dots,u_k\})=\{(u_1,v_1),\dots,(u_k,v_k)\}$ by $u_1\succ^*\dots\succ^* u_k = X$ and $v_1\succ_{u_i}\dots\succ_{u_i}v_{i}$ ($i=1,2,\dots,k$).

Next, we prove the converse direction, i.e., $\mathcal{F}'\subseteq \mathcal{F}$.
Let $U'\subseteq U$ and $M\coloneqq \SM(G, \succ; U')$. 
Since each $u \in U'$ matched with someone in $M$ proposes only to the neighbors $v' \in N_{G[U' \cup V]}(u) = u$ such that $v' \succeq_u M(u)$
throughout the deferred acceptance algorithm (recall Algorithm \ref{alg:gs}),
we have
$$F(U')=\partial(M) \cap V=\bigcup_{(u,v)\in M}\{\, v' \mid v' \in u,~v'\succeq_{u} v \,\}\in \mathcal{F},$$
where the last membership follows from the facts that
$\{\, v' \mid v' \in u,~v'\succeq_{u} v \,\}\in\mathcal{F}$ for all $(u,v)\in E$
(recall the definitions of ${\succ_u} = {\succ^u}$ and of $d$)
and that $\mathcal{F}$ is union-closed.
Therefore, we get $\mathcal{F}'\subseteq \mathcal{F}$.
\end{proof}

\subsection{Representation by Maximum-Weight Matchings}
We define $b$ to be a unique bijection from $\mathcal{F}\setminus\{\emptyset\}$ to $\{1,2,\dots,|\mathcal{F}| - 1\}$
consistent with $\succ^\ast$, i.e.,
for any distinct $X,Y\in \mathcal{F}\setminus\{\emptyset\}$,
we have $b(X)>b(Y)$ if and only if $X\succ^* Y$.
We build a weighted matching instance $(G=(U,V;E),w)$ as follows:
\begin{itemize}
\item $U\coloneqq \mathcal{F}\setminus\{\emptyset\}$ and $V\coloneqq S$;
\item $E\coloneqq \{\, (u,v)\mid u\in U,~v\in u \,\}$;
\item $w((u,v_i))\coloneqq2^{|V|\cdot b(u)+i}$ for $u=\{v_1,\dots,v_k\}\in\mathcal{F}\setminus\{\emptyset\}$ and $v_i\in u$
such that $v_1\succ^u\dots\succ^u v_k$.
\end{itemize}
We show that this weighted matching instance also derives the desired antimatroid.
Note that the maximum-weight matching is lexicographically maximum (with respect to the order of edge weights)
because the edge weights are distinct power-of-two values.
\begin{theorem}\label{theorem:mm_rep}
Let $(S, \cF)$ be an antimatroid, and $F \colon 2^{\cF \setminus \{\emptyset\}} \to 2^S$
the map induced by the weighted matching instance $(G, w)$ defined as above.
Then the codomain of $F$ coincides with $\cF$.
\end{theorem}
\begin{proof}
Let $\mathcal{F}'=\{\, F(U')\mid U'\subseteq U \,\}$. 
We claim that $\mathcal{F}'=\mathcal{F}$.

We first see $\mathcal{F}\subseteq \mathcal{F}'$.
Let $X=\{v_1,\dots,v_{k}\}\in\mathcal{F}$ such that $v_1\succ^{X}\dots\succ^{X}v_{k}$, i.e., $v_i=d(X \setminus \{v_{i+1},\dots,v_{k}\})$ $(i=1,2,\dots,k)$.
We define $u_i \coloneqq  \{v_1,\dots,v_i\} \in \cF \setminus \{\emptyset\} = U$ for $i=1,2,\dots,k$.
Then, $X=F(\{u_1,\dots,u_k\})\in\mathcal{F}'$ because
$\{(u_1,v_1),\dots,(u_k,v_k)\}$ is the maximum-weight matching between $\{u_1,\dots,u_k\}$ and $V$
by $b(u_1)>\dots>b(u_k)$.

Next, we prove the converse direction, i.e., $\mathcal{F}'\subseteq \mathcal{F}$.
Let $U'\subseteq U$ and $M\coloneqq \MM(G,w; U')$. 
Recall that the maximum-weight matching is lexicographically maximum.
We then have
$$F(U')=\partial(M) \cap V=\bigcup_{(u,v)\in M}\{\, v' \mid v' \in u,~v'\succeq_{u} v \,\}\in \mathcal{F},$$
since $\{\, v' \mid v' \in u,~v'\succeq^u v \,\}\in\mathcal{F}$ for all $(u,v)\in E$ and $\mathcal{F}$ is union-closed.
Therefore, we get $\mathcal{F}'\subseteq \mathcal{F}$.
\end{proof}

\section{Antimatroids Induced by Matchings}\label{sec:induce}
In this section, we prove that any 
matching instance induces an antimatroid.

\subsection{Antimatroids Induced by Stable Matchings}
\begin{theorem}\label{theorem:sm_induce}
  Let $F\colon 2^{U} \to 2^{V}$ be the map induced by a stable matching instance $(G=(U,V;E), {\succ})$,
  and $\cF \coloneqq \{\, F(U') \mid U' \subseteq U \,\}$.
  Then the set system $(V, \cF)$ is an antimatroid.
\end{theorem}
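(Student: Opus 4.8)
The plan is to verify the three defining conditions of an antimatroid: $\emptyset \in \cF$, accessibility, and union-closedness. The first is immediate, since running the deferred acceptance algorithm on the empty proposer set gives $\SM(G,\succ;\emptyset)=\emptyset$, hence $F(\emptyset)=\emptyset \in \cF$. The real work lies in accessibility and, especially, union-closedness, and I expect the latter to be the main obstacle.

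For accessibility, let $X = F(U') \ne \emptyset$ be given. First I would pass to a \emph{tight} representative: choose an inclusion-minimal $U'' \subseteq U'$ with $F(U'')=X$ (possible since $U'$ itself qualifies). I claim $|U''| = |X|$. Indeed, if some $u \in U''$ were left unmatched by a stable matching $M$ of $(G,\succ)_{U'' \cup V}$, then $M$ would remain stable after deleting $u$ (deleting an unmatched vertex creates no blocking pair), so $F(U''-u)=X$, contradicting minimality; hence every vertex of $U''$ is matched and $|U''| = |M| = |F(U'')| = |X|$. Now pick any $q \in U''$ and set $U''' := U'' - q$. By Lemma~\ref{lemma:sm_prop}\ref{lemma:sm_prop_shrink} (applicable since $U''$ is tight) we get $|F(U''')| = |U'''| = |X|-1$, while Lemma~\ref{lemma:sm_prop}\ref{lemma:sm_prop_monotone} gives $F(U''') \subseteq F(U'') = X$. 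Therefore $F(U''') = X - v$ for the unique $v \in X \setminus F(U''')$, which shows $X - v \in \cF$ and establishes accessibility.

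For union-closedness I must show $F(U_1) \cup F(U_2) \in \cF$ for all $U_1, U_2 \subseteq U$. Writing $X_i := F(U_i)$, the accessibility analysis lets me replace each $U_i$ by a tight set $C_i$ with $F(C_i)=X_i$ and $|C_i|=|X_i|$, so that $M_i := \SM(G,\succ;C_i)$ is a perfect matching between $C_i$ and $X_i$. I would then study the symmetric difference $M_1 \triangle M_2$, whose connected components are alternating paths and cycles. The goal is to extract a sub-matching $N \subseteq M_1 \cup M_2$ that saturates exactly $X_1 \cup X_2$ on the $V$-side; taking $U_3$ to be the set of $U$-vertices saturated by $N$ is then the candidate with $F(U_3)=X_1\cup X_2$. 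The crucial structural fact enabling this is the following Key Lemma: \emph{no path-component of $M_1 \triangle M_2$ has both of its endpoints in $V$}. Granting it, every component can be matched so as to cover all of its $V$-vertices (cycles and paths with one endpoint in each side automatically, $U$-to-$U$ paths by dropping one $U$-endpoint), and adding back the common edges $M_1 \cap M_2$ produces the desired $N$.

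The heart of the argument — and the step I expect to be hardest — is proving the Key Lemma and then certifying that the instance on $U_3 \cup V$ indeed realizes $X_1 \cup X_2$. For the Key Lemma I would argue by contradiction along a putative path $v_0 - u_1 - v_1 - \cdots - u_k - v_k$ with $v_0, v_k \in V$, normalized so that the edge at $v_0$ lies in $M_1$: stability of $M_1$ and $M_2$ forces, edge by edge, a uniform preference pattern in which each $u_i$ prefers its $M_2$-partner $v_i$ to its $M_1$-partner $v_{i-1}$, giving in particular $v_k \succ_{u_k} v_{k-1}$. Feeding this into the constraint from the free endpoint $v_k$ — since $v_k$ is unmatched by $M_1$ and $u_k \in C_1$ is a neighbor, stability of $M_1$ forces $v_{k-1} = M_1(u_k) \succ_{u_k} v_k$ — yields the contradiction. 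The remaining delicate point is that the assembled $N$ need not be stable for an arbitrary choice of edges inside each component; here I would exploit that each component behaves like a rotation with a consistent direction of preference, choosing its edges so that neither the free $V$-vertices nor the saturated ones form blocking pairs, and then invoke the uniqueness of the $V$-saturation across all stable matchings to conclude $F(U_3) = X_1 \cup X_2$. Controlling these blocking pairs simultaneously is the main technical obstacle.
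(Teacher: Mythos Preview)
Your accessibility argument is essentially the paper's (the paper takes $U''$ to be the set of matched vertices of $\SM(G,\succ;U')$ rather than an inclusion-minimal representative, but the effect and the use of Lemma~\ref{lemma:sm_prop} are identical). For union-closedness, however, your route diverges substantially from the paper's, and the step you yourself flag as ``the main technical obstacle'' is a genuine gap that your proposal does not close.

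Your Key Lemma is correct, and the inductive proof you sketch is sound: starting from the free $V$-endpoint one propagates a consistent preference direction along the path and reaches a contradiction at the other end. But the subsequent step --- choosing $N \subseteq M_1 \cup M_2$ so that it is \emph{stable} in $G[U_3 \cup V]$ --- is not handled by the rotation heuristic you invoke. The consistent preference direction along a single component says nothing about an edge $(u,v')$ with $u$ in one component and $v'$ in another; there is no reason local choices on each component suppress such cross-component blocking pairs. Concretely: stability of $M_1$ gives $M_1(u)\succ_u v'$ or $M_1(v')\succ_{v'}u$, and stability of $M_2$ gives the analogous disjunction, but if $N(u)=M_1(u)$ while $N(v')=M_2(v')$ these do not combine to rule out that $(u,v')$ blocks $N$. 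Nor does the mere existence of a perfect matching between $U_3$ and $X_1\cup X_2$ suffice: it gives $|U_3|=|X_1\cup X_2|$, but neither $F(U_3)\subseteq X_1\cup X_2$ nor $|F(U_3)|=|U_3|$ follows without further argument.

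The paper sidesteps this difficulty entirely. Instead of assembling a matching from $M_1,M_2$ by hand, it forms the auxiliary edge set $\hat E=\{(u,v):(u,v')\in M_1\cup M_2,\ v\succeq_u v'\}$ and observes that every $v$ appearing in $\hat E$ already lies in $V^*=F(U_1)\cup F(U_2)$ (otherwise $(u,v)$ would block $M_1$ or $M_2$). Running deferred acceptance on $U_1\cup U_2$ in $\hat G=(U,V;\hat E)$ therefore yields a matching whose $V$-side is exactly $V^*$; taking $U^*$ to be its $U$-side and $M^*=\SM(\hat G,\hat\succ;U^*)$, one checks that $M^*$ is stable also in $G[U^*\cup V]$, since any putative blocking edge $(u^*,v^*)\notin\hat E$ forces $M^*(u^*)\succ_{u^*}v^*$. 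Stability thus comes for free from the algorithm rather than being engineered component-by-component, which is precisely the obstacle your plan leaves unresolved.
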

\begin{proof}
We have $\emptyset\in\cF$ since $F(\emptyset)=\emptyset$.
To see the accessibility, let us fix $V'\in\cF \setminus \{\emptyset\}$ and let $U'$ be a subset of $U$ such that $F(U')=V'$.
Define $M\coloneqq\SM(G, \succ; U')$ and $U''= \partial(M) \cap U$.
Then $M$ is also a stable matching in $G[U''\cup V]$ because $U''\subseteq U'$ and $M \in \cM_{G[U''\cup V]}$.
Thus we have $F(U'')=V'$ and $|U''|=|V'| \geq 1$.
Let us fix $u^*\in U''$. Then $F(U''-u^*)\subseteq V'$ and $F(U''-u^*)=|V'|-1$
by Lemma \ref{lemma:sm_prop}.
Therefore, there exists $v\in V'$ such that $V'-v=F(U''-u^*)\in\cF$.

In what follows, we show that $\cF$ is union-closed.
Fix any two subsets $U_1, U_2 \subseteq U$, and let $V^* \coloneqq F(U_1) \cup F(U_2)$.
We shall show that there exists $U^* \subseteq U_1 \cup U_2$ such that $V^* = F(U^*)$.
Note that, due to \ref{lemma:sm_prop_monotone} in Lemma \ref{lemma:sm_prop},
we have $F(U_1) \subseteq F(U_1 \cup U_2)$ and $F(U_2) \subseteq F(U_1 \cup U_2)$,
and hence $V^* \subseteq F(U_1 \cup U_2)$.

Let $M_i\coloneqq\SM(G,\succ; U_i)$ for $i = 1, 2$,
$\hat{E}\coloneqq\{\, (u,v)\mid (u,v')\in M_1 \cup M_2,~v\succeq_u v' \,\}$,
and $\hat{G}\coloneqq(U,V;\hat{E})$.
Note that $\partial(\hat{E}) \cap V \subseteq V^*$, because if there exists $(u,v)\in \hat{E}$ such that $v\not\in V^*$, then it is a blocking pair against $M_1$ or $M_2$.
For each $r\in U\cup V$, let $\hsucc_r$ denote the restriction of $\succ_r$ to $N_{\hat{G}}(r)$, i.e.,
$\hsucc_r$ is a strict order on $N_{\hat{G}}(r)$ such that, for every $x, y\in N_{\hat{G}}(r)$,
$x \hsucc_r y$ if and only if $x \succ_r y$.
We then have
  \[\SM(\hat{G}, \hsucc; U_i)=M_i=\SM(G, \succ; U_i) \quad (i = 1, 2).\]

Let $\hat{F}\colon 2^U\to 2^V$ be the map induced by $(\hat{G}, \hsucc)$.
Then, by \ref{lemma:sm_prop_monotone} in Lemma \ref{lemma:sm_prop},
$$F(U_i)= \partial(M_i) \cap V =\hat{F}(U_i)\subseteq \hat{F}(U_1\cup U_2) \quad (i = 1, 2),$$ 
and hence $V^* = F(U_1) \cup F(U_2)\subseteq \hat{F}(U_1\cup U_2)$.
In addition, since $\partial(\hat{E}) \cap V \subseteq V^*$, we have $\hat{F}(U_1\cup U_2)\subseteq V^*$.
Thus, we obtain $\hat{F}(U_1\cup U_2)=V^*$.

Let $\hat{M}\coloneqq\SM(\hat{G}, \hsucc; U_1\cup U_2)$ and $U^\ast\coloneqq \partial(\hat{M}) \cap U \subseteq U_1 \cup U_2$.
We define
$$M^\ast \coloneqq \SM(\hat{G}, \hsucc; U^\ast).$$
Note that $\hat{F}(U_1\cup U_2)=\hat{F}(U^\ast)$ because $\hat{M}$ is a stable matching in $\hat{G}[U^\ast\cup V]$.
In addition, every vertex in $U^\ast$ is matched in $M^\ast$
because $|M^\ast|=|\hat{F}(U^\ast)|=|\hat{F}(U_1\cup U_2)|=|\hat{M}|=|U^\ast|$.

The proof is completed by showing that $M^\ast$ is a stable matching also in $G[U^\ast\cup V]$ (with respect to $\succ$)
because this implies $F(U^\ast)=\partial(M^\ast) \cap V=\hat{F}(U^\ast)=\hat{F}(U_1\cup U_2)=V^\ast$ (with the aid of Theorem~\ref{thm:RH}).
To obtain a contradiction, suppose that there exists a blocking pair $(u^*,v^*) \in E[U^\ast\cup V]$ against $M^\ast$.
Since $M^\ast$ is a stable matching in $\hat{G}[U^\ast\cup V]$, 
we can assume that $(u^*,v^*)\not\in\hat{E}$.
Then, by the definition of $\hat{E}$, we have $v\succ_{u^*} v^*$ for every $v \in N_{\hat{G}}(u^\ast)$.
As $(u^*, M^\ast(u^*))\in\hat{E}$ implies $M^\ast(u^\ast) \succ_{u^\ast} v^\ast$,
we get that $(u^*,v^*)$ cannot be a blocking pair against $M^\ast$ in $G[U^\ast\cup V]$.
This contradicts our assumption.
\end{proof}

\subsection{Antimatroids Induced by Maximum-Weight Matchings}
\begin{theorem}\label{theorem:mm_induce}
  Let $F\colon 2^{U} \to 2^{V}$ be the map induced by a weighted matching instance $(G=(U,V;E), w)$,
  and $\cF \coloneqq \{\, F(U') \mid U' \subseteq U \,\}$. Then the set system $(V, \cF)$ is an antimatroid.
\end{theorem}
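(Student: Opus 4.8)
The plan is to mirror the structure of the stable-matching proof (Theorem~\ref{theorem:sm_induce}) as closely as possible, since Lemma~\ref{lemma:mm_prop} gives the weighted-matching analogues of exactly the two properties used there. First I would verify $\emptyset \in \cF$, which holds because $\MM(G,w;\emptyset)=\emptyset$ forces $F(\emptyset)=\emptyset$. For accessibility, fix a nonempty $V' \in \cF$ with $V' = F(U')$, set $M \coloneqq \MM(G,w;U')$, and let $U''$ be the set of left-endpoints of $M$. As in the stable case, $M$ remains the unique maximum-weight matching in $G[U'' \cup V]$ (any heavier matching there would also be feasible and heavier in $G[U' \cup V]$), so $F(U'') = V'$ with $|U''| = |V'|$. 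Picking any $u^\ast \in U''$, property~\ref{lemma:mm_prop_monotone} gives $F(U''-u^\ast) \subseteq V'$ and property~\ref{lemma:mm_prop_shrink} gives $|F(U''-u^\ast)| = |V'|-1$, so $V'-v = F(U''-u^\ast) \in \cF$ for some $v \in V'$. This part is essentially identical to the stable-matching argument.

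The harder part is union-closedness, and here the weighted setting actually admits a cleaner argument than the stable one, because the ``restricted instance'' construction via $\hat{E}$ has no direct analogue. Instead I would argue directly with symmetric differences. Fix $U_1, U_2 \subseteq U$, put $V^\ast \coloneqq F(U_1) \cup F(U_2)$ and $M_i \coloneqq \MM(G,w;U_i)$, and seek $U^\ast \subseteq U_1 \cup U_2$ with $F(U^\ast) = V^\ast$. By monotonicity~\ref{lemma:mm_prop_monotone} we already have $V^\ast \subseteq F(U_1 \cup U_2)$. The natural candidate is to take $M \coloneqq \MM(G,w;U_1 \cup U_2)$, let $U^\ast$ be its left-endpoints, and hope $F(U^\ast) = V^\ast$. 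The inclusion $V^\ast \subseteq F(U_1 \cup U_2) = F(U^\ast)$ comes for free (the last equality because $M$ is still optimal on $G[U^\ast \cup V]$), so the real content is the reverse inclusion $F(U_1 \cup U_2) \subseteq V^\ast$, i.e.\ that enriching from $U_1 \cup U_2$ does not match any right vertex outside $F(U_1) \cup F(U_2)$.

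I expect the reverse inclusion to be the main obstacle, and I would attack it with an exchange argument on $M \triangle (M_1 \cup M_2)$-type structures. Suppose some $v^\ast \in F(U_1 \cup U_2) \setminus V^\ast$; then $v^\ast$ is matched in $M$ to some $u \in U_1 \cup U_2$, say $u \in U_1$, but $v^\ast$ is matched in neither $M_1$ nor $M_2$. The idea is to examine the component of $M \triangle M_1$ (inside $G[(U_1 \cup U_2) \cup V]$ versus $G[U_1 \cup V]$) containing the edge $(u,v^\ast)$: since $v^\ast \notin F(U_1)$, this component is an alternating path, and a careful weight comparison should let me transfer an augmenting/exchanging segment to contradict the optimality of either $M$ or $M_1$, in the same spirit as the proof of Lemma~\ref{lemma:mm_prop}. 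The delicate point is bookkeeping which edges lie in which induced subgraph (so that the swapped matchings are genuinely feasible on the correct vertex sets) and ensuring the strict weight inequalities from the uniqueness assumption point the right way; I would organize this by choosing $v^\ast$ and the path so that one endpoint is the exposed vertex $v^\ast$ itself, making the alternating path's parity and feasibility transparent.
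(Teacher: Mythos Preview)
Your treatment of $\emptyset\in\cF$ and of accessibility is fine and matches the paper's argument.

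The union-closedness plan, however, has a genuine gap: the reverse inclusion $F(U_1\cup U_2)\subseteq V^\ast$ that you isolate as ``the real content'' is simply \emph{false} in general, so no exchange argument can establish it. The paper's own Example~2.4 already witnesses this. Take $U_1=\{u_1\}$ and $U_2=\{u_3\}$ there; then $F(U_1)=F(U_2)=\{v_1\}$, so $V^\ast=\{v_1\}$, whereas $\MM(G,w;\{u_1,u_3\})=\{(u_1,v_2),(u_3,v_1)\}$ gives $F(U_1\cup U_2)=\{v_1,v_2\}\not\subseteq V^\ast$. If you trace your proposed alternating-path argument on this instance, the component of $M\triangle M_1$ through $v_2$ is the path $v_2\,u_1\,v_1\,u_3$; its terminal edge $(u_3,v_1)$ lies outside $E[U_1\cup V]$, so you cannot symmetric-difference it into $M_1$, and symmetric-differencing it out of $M$ strictly lowers the weight. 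No contradiction arises, because there is nothing to contradict.

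Consequently your choice $U^\ast=\{\,u\mid (u,v)\in\MM(G,w;U_1\cup U_2)\,\}$ is the wrong candidate. The paper does \emph{not} avoid an auxiliary construction in the weighted case; it builds one of a different flavour than $\hat E$. Concretely, it forms $(\tilde G,\tilde w)$ by replacing every $v\in V\setminus V^\ast$ with $|U|$ clones (same incident edges, same weights), so that vertices outside $V^\ast$ never constrain the matching. With $\tilde M\coloneqq\MM(\tilde G,\tilde w;U_1\cup U_2)$, one sets $M^\ast\coloneqq\tilde M\cap E$ and $U^\ast\coloneqq\{\,u\mid (u,v)\in M^\ast\,\}$; monotonicity in $\tilde G$ gives $V^\ast\subseteq\tilde F(U_1\cup U_2)$, and since only vertices of $V^\ast$ survive in $\tilde M\cap E$ one gets $\tilde F(U^\ast)=V^\ast$. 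A short exchange argument then shows $M^\ast=\MM(G,w;U^\ast)$, whence $F(U^\ast)=V^\ast$. The moral is that, just as in the stable case, one must first pass to a modified instance in which the obstruction ``$F(U_1\cup U_2)$ may overshoot $V^\ast$'' has been engineered away.
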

\begin{proof}
We have $\emptyset\in\cF$ since $F(\emptyset)=\emptyset$.
Also, we can derive the accessibility
from Lemma \ref{lemma:mm_prop}
similarly to the proof of Theorem \ref{theorem:sm_induce}.

It remains to prove that $\cF$ is union-closed.
Fix any two subsets $U_1, U_2 \subseteq U$, and let $V^* \coloneqq F(U_1) \cup F(U_2)$.
Let $M_i\coloneqq\MM(G, w; U_i)$ for $i = 1, 2$.
For each vertex $v\in V\setminus V^*$, we create $|U|$ new vertices $h_{v,j}$ $(j = 1, 2, \dots, |U|)$,
and let $H_v$ denote the set of those vertices.
We define a new weighted matching instance $(\tG=(U,\tV;\tE), \tw)$ as follows: 
\begin{align*}
\tV&\coloneqq V^*\cup\bigcup_{v\in V\setminus V^*}H_v,\\
\tE&\coloneqq\{\, (u,v)\mid (u,v)\in E,~v\in V^* \,\}\cup \bigcup_{j=1}^{|U|}\{\, (u,h_{v,j})\mid (u,v)\in E,~v \notin V^* \,\},
\end{align*}

and for each $(u, v) \in \tE$,
  $$\tw((u,v))\coloneqq\begin{cases}
  w((u,v))&\text{if }v\in V^*,\\
  w((u,v'))&\text{if }v\in H_{v'}.
  \end{cases}$$
Let $\tM$ be the maximum-weight matching\footnote{Recall that there may be multiple choices of maximum-weight matchings $\tM$ in $\tG[U \cup \tV]$
with respect to $\tw$,
and if so $\MM(\tG, \tw; U)$ is defined as the lexicographically smallest one among them,
with respect to a fixed order on the edges $\tE$.
One can arbitrarily fix an order on $\tE$, which is not essential in the proof.} $\MM(\tG,\tw; U)$,
$M^*\coloneqq \tM\cap E$, and $U^*\coloneqq \partial(M^*) \cap U$.
In addition, let $\tF\colon 2^U\to 2^{\tV}$ be the map induced by $(\tG,\tw)$.

We first claim that $\tF(U^*)=V^*$.
Note that $\MM(G,w; U_i)=\MM(\tG,\tw; U_i) =M_i$ for $i = 1, 2$ 
by the definition of $(\tG,\tw)$.
As $F(U_i)= \partial(M_i) \cap V=\tF(U_i)\subseteq \tF(U_1\cup U_2)$ 
by \ref{lemma:mm_prop_monotone} in Lemma \ref{lemma:mm_prop},
we have $V^* = F(U_1)\cup F(U_2)\subseteq \tF(U_1\cup U_2)$.
Thus we get $\tF(U^*)=\tF(U_1\cup U_2)\cap V^*=V^*$.

Next, we observe that $M^*=\MM(G,w; U^\ast)$.
Suppose to the contrary that $M^*\triangle\MM(G,w; U^\ast) \neq \emptyset$,
and let $X\subseteq M^*\triangle\MM(G,w; U^\ast)$ be one of its connected components.
We then have $\tw(\tM\triangle X) + w(\MM(G,w; U^\ast)\triangle X) = \tw(\tM) + w(\MM(G,w; U^\ast))$.
Then, by the definition of $\MM$,
we have
$\tw(\tM\triangle X) = \tw(\tM)$, $w(\MM(G,w; U^\ast)\triangle X) = w(\MM(G,w; U^\ast))$, and
$\tM$ and $\MM(G,w; U^\ast)$ are lexicographically smaller than
$\tM \triangle X$ and $\MM(G,w; U^\ast)\triangle X$, respectively.
The last property implies that $\tM \cap X = M^\ast \cap X$ and $\MM(G, w; U^\ast) \cap X$ are lexicographically smaller than each other,
a contradiction.

Consequently, we obtain $V^*=\tF(U^*)= \partial(M^*) \cap V=F(U^*)\in\cF$.
\end{proof}

\section{Concluding Remark}
In this paper, we have seen novel connections between matchings and antimatroids.
We have proved that the codomains of maps induced by stable matchings and maximum-weight matchings form antimatroids.
Conversely, we have provided representation theorems for antimatroids through stable matchings and maximum-weight matchings.

Although stable matchings and maximum-weight matchings have similar flavor,
the proofs of Theorems~\ref{theorem:sm_induce} and~\ref{theorem:mm_induce} are based on substantially different properties.
It is an open problem to establish a unified framework to discuss a sufficient condition
for a map $F\colon 2^U\to 2^V$ to induce an antimatroid on $V$ as its codomain.

\section*{Acknowledgments}
The first author was supported by JSPS KAKENHI Grant Number 16K16005 and
the second author was supported by JSPS KAKENHI Grant Number 16H06931.

\bibliographystyle{elsarticle-num-names-alpha}
\bibliography{am}

\appendix
\renewcommand*{\thesection}{\Alph{section}}

\section{On the Number of Antimatroids}\label{sec:number}
We show that the number of distinct antimatroids is doubly exponential in the ground set size,
where a ground set is fixed and we count separately isomorphic ones.

\begin{proposition}\label{prop:number}
  The number of antimatroids on a ground set $E$ of size $n$ is at least
  \[
    2^{\binom{n}{\lfloor n/2\rfloor}/2n}=2^{2^{n-O(\log n)}}.
  \]
\end{proposition}

\begin{proof}
The statement follows from the following three facts:
\begin{itemize}
\item
  the number of distinct matroids on the ground set $E$ has the same lower bound (Theorem~\ref{thm:Knuth}), and
\item
  there exists an injection that maps a matroid on $E$ to a path-independent choice function on $E$ (Lemma~\ref{lem:injection}), and
\item
  there exists a one-to-one correspondence between the antimatroids on $E$ and the path-independent choice functions on $E$ (Theorem~\ref{thm:Koshevoy}). \qedhere
\end{itemize}
\end{proof}

A \emph{matroid} is a set system $(E,\mathcal{I})$ satisfying
(I1) $\emptyset\in\mathcal{I}$,
(I2) $X\subseteq Y\in\mathcal{I}$ implies $X\in\mathcal{I}$, and
(I3) $X,Y\in\mathcal{I}$ with $|X|<|Y|$ implies the existence of $e\in Y\setminus X$ such that $X\cup\{e\}\in\mathcal{I}$.
See, e.g., \cite{Oxley2011} for more on the basics.
\begin{theorem}[Knuth~\cite{Knuth1974}]\label{thm:Knuth}
  The number of matroids on a ground set $E$ of size $n$ is at least
  \[
    2^{\binom{n}{\lfloor n/2\rfloor}/2n}=2^{2^{n-O(\log n)}}.
  \]
\end{theorem}

\begin{theorem}[Koshevoy~{\cite[Proposition 2 and Theorems 2 and 3]{koshevoy1999}}]\label{thm:Koshevoy}
  There exists a one-to-one correspondence between the antimatroids on $E$ and the path-independent choice functions on $E$.
\end{theorem}

The following lemma completes the proof of Proposition~\ref{prop:number}.

\begin{lemma}\label{lem:injection}
  Let $E = \{1, \dots, n\}$.
  For a matroid $(E,\mathcal{I})$, define a choice function $C_{\mathcal{I}}\colon 2^E\to 2^E$ by
  \begin{align*}
    C_{\mathcal{I}}(X)\coloneqq \mathrm{lex}\max\{\, X'\subseteq X\mid X'\in\mathcal{I} \,\},
  \end{align*}
  where $\mathrm{lex}\max$ returns the lexicographically maximum member\footnote{Each subset of $E$ is regarded as a sequence of its elements in descending order, e.g., $\{1, 3, 4\}$ is regarded as $(4, 3, 1)$, and the empty set is regarded as the lexicographically minimum.}.
  Then, $C_{\mathcal{I}}$ is path-independent,
  and the map $(E,\mathcal{I}) \mapsto C_{\mathcal{I}}$ is injective.
\end{lemma}

\begin{proof}
As shown in \cite[Example 3.5]{yokoi2014} (see also \cite[Section 6]{fleiner2003fixed}),
this $C_{\mathcal{I}}$ is a matroidal choice function, and hence is path-independent.

We confirm that the map $(E,\mathcal{I}) \mapsto C_{\mathcal{I}}$ is injective.
That is, we show that $C_{\mathcal{I}} \not\equiv C_{\mathcal{I'}}$ if $\mathcal{I} \neq \mathcal{I}'$,
where $C_{\mathcal{I}} \not\equiv C_{\mathcal{I'}}$ means that $C_{\mathcal{I}}(X) \neq C_{\mathcal{I'}}(X)$ for some $X \subseteq E$.
Suppose that $\mathcal{I} \neq \mathcal{I}'$.
Then, without loss of generality, we may assume that there exists a set $I\in \mathcal{I}\setminus\mathcal{I}'$.
This implies that $C_{\mathcal{I}}(I)=I$ while $C_{\mathcal{I'}}(I)\ne I$, and hence $C_{\mathcal{I}} \not\equiv C_{\mathcal{I'}}$.
\end{proof}

\end{document}